\definecolor{codegreen}{rgb}{0,0.6,0}
\definecolor{codegray}{rgb}{0.5,0.5,0.5}
\definecolor{codepurple}{rgb}{0.58,0,0.82}
\newcommand*{\FormatDigit}[1]{\textcolor{magenta!80!black}{#1}}
\lstdefinestyle{CStyle}{
    backgroundcolor=\color{gray!10},   
    commentstyle=\color{codegreen},
    keywordstyle=\color{magenta!80!black},
    numberstyle=\tiny\color{codegray},
    stringstyle=\color{codepurple},
    basicstyle=\ttfamily\scriptsize,
    breakatwhitespace=false,         
    frame=single,
    stepnumber=5,
    breaklines=true,                 
    captionpos=b,                    
    keepspaces=true,                 
    numbers=left,                    
    numbersep=5pt,                  
    showspaces=false,                
    showstringspaces=false,
    showtabs=false,                  
    tabsize=2,
    language=C,
    morekeywords={Vec,Mat,KSP,IS,PetscBool,PetscViewer,MatPartitioning,PetscErrorCode},
    classoffset=1,
    morekeywords={PETSC_MAX_PATH_LEN,NULL,PETSC_COMM_WORLD,DIFFERENT_NONZERO_PATTERN,PETSC_ERR_USER,MAT_INITIAL_MATRIX,FILE_MODE_READ,MAT_COPY_VALUES},keywordstyle={\color{orange!50!black}},
    literate={0}{{\FormatDigit{0}}}{1}%
             {1}{{\FormatDigit{1}}}{1}%
             {2}{{\FormatDigit{2}}}{1}%
             {3}{{\FormatDigit{3}}}{1}%
             {4}{{\FormatDigit{4}}}{1}%
             {5}{{\FormatDigit{5}}}{1}%
             {6}{{\FormatDigit{6}}}{1}%
             {7}{{\FormatDigit{7}}}{1}%
             {8}{{\FormatDigit{8}}}{1}%
             {9}{{\FormatDigit{9}}}{1}%
             {.0}{{\FormatDigit{.0}}}{2}
             {.1}{{\FormatDigit{.1}}}{2}
             {.2}{{\FormatDigit{.2}}}{2}%
             {.3}{{\FormatDigit{.3}}}{2}%
             {.4}{{\FormatDigit{.4}}}{2}%
             {.5}{{\FormatDigit{.5}}}{2}%
             {.6}{{\FormatDigit{.6}}}{2}%
             {.7}{{\FormatDigit{.7}}}{2}%
             {.8}{{\FormatDigit{.8}}}{2}%
             {.9}{{\FormatDigit{.9}}}{2}%
             {\ }{{ }}{1}
             ,
}
    \pgfplotsset{
        cycle list/.define={my marks}{
            every mark/.append style={solid,fill=\pgfkeysvalueof{/pgfplots/mark list fill}},mark=*\\
            every mark/.append style={solid,fill=\pgfkeysvalueof{/pgfplots/mark list fill}},mark=square*\\
            every mark/.append style={solid,fill=\pgfkeysvalueof{/pgfplots/mark list fill}},mark=triangle*\\
            every mark/.append style={solid,fill=\pgfkeysvalueof{/pgfplots/mark list fill}},mark=diamond*\\
        },
    }
\crefname{hypothesis}{Hypothesis}{Hypotheses}
\title{Efficient Algebraic Two-Level Schwarz Preconditioner For Sparse Matrices\thanks{Submitted to the editors \today.}}
\author{Hussam Al Daas\thanks{STFC Rutherford Appleton Laboratory, Harwell Campus, Didcot, Oxfordshire, OX11 0QX, UK 
  (\email{hussam.al-daas@stfc.ac.uk},
  \email{tyrone.rees@stfc.ac.uk}).}
  \and Pierre Jolivet\thanks{CNRS, ENSEEIHT, 2 rue Charles Camichel, 31071 Toulouse Cedex 7, France (\email{pierre.jolivet@enseeiht.fr}).}
\and Tyrone Rees\footnotemark[2] 
}
\DeclareMathOperator{\diag}{diag}
\newcommand{\C}{\mathbb{C}}
\newcommand{\hop}{H}
\newcommand{\Rr}{\mathcal{R}}
\newcommand{\part}[1]{\Omega_{#1}}
\newcommand{\res}[1]{R_{#1}}
\newcommand{\rest}[1]{R^\top_{#1}}
\newcommand{\resh}[1]{R^\hop_{#1}}
\newcommand{\nomega}[1]{n_{#1}}
\newcommand{\schwarz}[1]{M^{-1}_{\text{\tiny{#1}}}}
\pgfplotsset{compat=newest}
\pgfplotsset{colormap={paraview}{rgb(0cm)=(0.278431,0.278431,0.858824) rgb(0.1428571429cm)=(0,0,0.360784) rgb(0.2857142857cm)=(0,1,1) rgb(0.4285714286cm)=(0,0.501961,0) rgb(0.5714285714cm)=(1,1,0) rgb(0.7142857143cm)=(1,0.380392,0) rgb(0.8571428571cm)=(0.419608,0,0) rgb(1cm)=(0.878431,0.301961,0.301961)}}
\pgfplotsset{colormap={paraview-heat}{rgb(0cm)=(0.231373,0.298039,0.752941) rgb(0.5cm)=(0.865,0.865,0.865) rgb(1.0cm)=(0.705882,0.0156863,0.14902)}}
\begin{document}

\maketitle

\begin{abstract}
 Domain decomposition methods are among the most efficient for solving sparse linear systems of equations.
 Their effectiveness relies on a judiciously chosen coarse space. Originally introduced and theoretically
 proved to be efficient for self-adjoint operators, spectral coarse spaces have been proposed in the past
 few years for indefinite and non-self-adjoint operators. This paper presents a new spectral coarse space
 that can be constructed in a fully-algebraic way unlike most existing spectral coarse spaces.
 We present theoretical convergence result for Hermitian positive definite diagonally dominant matrices.
 Numerical experiments and comparisons against state-of-the-art preconditioners in the multigrid
 community show that the resulting two-level Schwarz preconditioner is efficient
 especially for non-self-adjoint operators. Furthermore, in this case, our proposed
 preconditioner outperforms state-of-the-art preconditioners.
\end{abstract}

\begin{keywords}
  Algebraic domain decomposition, Schwarz preconditioner, sparse linear systems, diagonally dominant matrices.
\end{keywords}


\section{Introduction}
\label{sec:introduction}
In this paper, we develop an algebraic overlapping Schwarz preconditioner for 
the linear system of equations
\begin{equation*}
  \label{eq:Ax=b}
  Ax=b,
\end{equation*}
for a sparse matrix $A\in\C^{n\times n}$ and a given vector $b\in\C^n$.
Solving sparse linear systems of equations is omnipresent in scientific computing.
Direct approaches, based on Gaussian elimination,
have proved to be robust and efficient for a wide range of problems~\cite{DufER17}.
However, the memory required to apply sparse direct methods often scales poorly with the
problem size, particularly for three-dimensional discretizations of partial differential
equations (PDEs).
Furthermore, the algorithms underpinning sparse direct software are poorly suited to
parallel computation, which makes them difficult to adapt to emerging computing
architectures.

Iterative methods for solving linear systems~\cite{Saa19} have been an active research topic since early computers' days.
Their simple structure, at their most basic level requiring only matrix-vector multiplication and vector-vector operations, makes them attractive for tackling large-scale problems. However, since the convergence rate depends on the properties of the linear system, iterative methods are not, in general, robust.
For the class of iterative methods known as Krylov subspace methods,
we may alleviate this by applying a preconditioner, which transforms the problem
into one with more favorable numerical properties.  The choice of preconditioner is
usually problem-dependent, and a wide variety of preconditioning techniques have been proposed to improve the convergence rate of iterative methods,
see for example the recent survey~\cite{PeaP20} and the references therein.

Multilevel domain decomposition (DD) and multigrid methods are widely used preconditioners~\cite{DolJN15,SmiBG96,WidT05,XuZ17,Zha17}.
They have proved to be effective on a wide variety of matrices, but they are especially well suited to sparse Hermitian positive definite (HPD) matrices arising from
the discretization of PDEs.
Their efficiency stems from a judicious combination of a cheap fine-level solver with a coarse-space correction.
In the last two decades, there has been a great advance in the development of spectral coarse spaces that yield efficient preconditioners.
Spectral coarse spaces were initially proposed in the multigrid community for elliptic PDEs with self-adjoint 
operators~\cite{BreHMV99,ChaFHJMMRV03,EfeGV11,KolV06}, and similar ideas 
were later picked up by the DD community for the same kind of problems~\cite{AldG19,AldGJT19,AldJ21,BasSSS21,KlaRR15,KlaKR16,NatXDS11,SpiDHNPS14,SpiR13}.
The past three years have seen several approaches to tackle symmetric indefinite systems and non-self-adjoint problems.
For example, spectral coarse spaces for least squares problems and symmetric indefinite saddle-point systems were proposed in~\cite{AldJS21,NatT21}, where the problem is returned into the framework of self-adjoint operators.
An exciting new development is that a number of multigrid methods and spectral coarse spaces have been suggested for problems with indefinite or non-self-adjoint 
operators~\cite{BooD21,BooDGMS21,BooDGMS21_1,BooDJNOT21,BooDJT21,BooDNT20,DolJTCOR21,ManMRS19,ManRS18}. These coarse spaces are mainly based on heuristics and show efficiency on several challenging model problems arising from discretized PDEs.

A variety of mathematical tools such as the fictitious subspace lemma~\cite{Nep91} and local Hermitian positive semi-definite (HPSD) splitting matrices~\cite{AldG19} are now available to analyze and propose effective coarse spaces for self-adjoint operators.
However, these tools may not be directly used for indefinite or non-self-adjoint operators.
An alternative approach to studying the convergence of DD methods in the indefinite or non-self-adjoint case is to use Elman's theory~\cite{EisES83} of GMRES convergence, see for example \cite{BonCNT21,BooDGMS21,CaiW92}.

In this work, we propose a fully-algebraic spectral coarse space for the two-level Schwarz preconditioner for general sparse matrices.
We review the overlapping DD framework in~\cref{sec:DD}, including a summary of the main
features of local HPSD splitting matrices.
For each subdomain, we introduce the {\it local block splitting using lumping in the overlap} of a sparse matrix in~\cref{sec:twolevel}.
The coarse space is then constructed by solving locally and concurrently a generalized eigenvalue problem involving the local
block splitting matrix and the local subdomain matrix.
In the case where the matrix is HPD diagonally dominant, we prove that the local block splitting matrices are local HPSD splitting matrices, and in that case we show
that one can bound the condition number of the preconditioned matrix from above by a user-defined number.
Based on this heuristic, we generalize our approach for other cases.
Unlike most existing spectral coarse spaces, especially those suggested for indefinite or non-self-adjoint operators, we obtain the matrices involved
in the local generalized eigenvalue problem efficiently from the coefficient matrix; the preconditioner is therefore fully-algebraic.
In order to assess the proposed preconditioner, we provide in~\cref{sec:numerical_experiments} a set of numerical experiments on problems arising from a wide range of applications including convection-diffusion equation and other linear systems from the SuiteSparse Collection~\cite{DavH11}.
Furthermore, we compare our proposed preconditioner against state-of-the-art preconditioners in the multigrid community.
Finally, we give concluding remarks and future lines of research in \cref{sec:conclusion}.

\paragraph{Notation}
Let $1 \le n \le m$ and let 
$M \in \C^{m \times n}$ be a complex sparse matrix.
Let $\llbracket 1,p\rrbracket$ denote the set of the first $p$ positive integers, and 
let $S_1 \subset \llbracket 1, m \rrbracket$ and $S_2 \subset \llbracket 1, n\rrbracket$.
$M(S_1, :)$ is the submatrix of $M$ formed by the rows whose indices belong to
$S_1$ and $M(:, S_2)$ is the submatrix of $M$ formed by the columns whose indices belong to $S_2$.
$M(S_1,S_2)$ denotes the submatrix formed by taking the rows whose indices belong
to~$S_1$ and only retaining the columns whose indices belong to $S_2$.
$[S_1, S_2]$ means the concatenation of any two sets of integers $S_1$ and $S_2$, where
the order of the concatenation is important.
$I_n$ is the identity matrix of size $n$, 
the transpose matrix of $M$ is denoted $M^\top$, and the adjoint of $M$, denoted $M^\hop$, is the conjugate transpose of $M$, i.e., $M^\hop = \bar{M}^\top$.
$ker(M)$ and $range(M)$ denote the null space and the range of $M$, respectively.

\section{Domain decomposition}
\label{sec:DD}
Consider  ${\cal G}(A)$, the adjacency graph of the coefficient matrix in (\ref{eq:Ax=b}), and number its nodes, $V$, from $1$ to $n$. 
Using a graph partitioning algorithm, we split $V$ into $N \ll n$ \emph{nonoverlapping subdomains}, i.e., disjoint subsets $\part{Ii}$, $i \in\llbracket 1,N\rrbracket$, of size $\nomega{Ii}$. 
Let $\part{\Gamma i}$ be the subset, of size  $\nomega{\Gamma i}$, of nodes that are distance one in ${\cal G}(A)$ from 
the nodes in $\part{Ii}$,  $i \in\llbracket 1,N\rrbracket$.
We define the \emph{overlapping subdomain}, $\part{i}$, as $\part{i}=[\part{Ii}, \part{\Gamma i}]$,
with size $\nomega{i} = \nomega{\Gamma i} + \nomega{Ii}$.
The complement of $\part{i}$ in $\llbracket1,n\rrbracket$ is denoted by $\part{\text{c} i}$.

Associated with $\part{Ii}$ is a restriction (or projection) matrix 
$\res{Ii}\in \mathbb{R}^{n_{Ii} \times n}$ given by
$\res{Ii} = I_n(\part{Ii},:)$.
$\res{Ii}$ maps from the global domain to subdomain $\part{Ii}$. Its transpose
$\rest{Ii}$ is a prolongation matrix that maps from subdomain $\part{Ii}$ to the global domain.
Similarly, we define $\res{i}=I_n(\part{i},:)$ as the restriction operator to the overlapping subdomain $\part{i}$.

We define the {\em one-level Schwarz preconditioner} as
\begin{equation}
  \label{eq:additive_one_level}
  M_{\text{\tiny ASM}}^{-1} = \sum_{i=1}^N \rest{i} A_{ii}^{-1} \res{i},
\end{equation}
where we assume $A_{ii}=\res{i} A \rest{i}$ is nonsingular for $i\in\llbracket1,N\rrbracket$.

Applying this preconditioner to a vector involves solving concurrent local
problems in each subdomain.
Increasing $N$ reduces the size of the subdomains,
leading to smaller local problems and, correspondingly, faster computations.
However, in practice, preconditioning by $M_{\text{\tiny ASM}}^{-1}$ alone is often not be enough for convergence
of the iterative solver to be sufficiently rapid. 
We can improve convergence, while still maintaining robustness with respect to $N$, by applying a
suitably chosen \emph{coarse space}, or \emph{second-level}
 \cite{AldG19,BonCNT21,DolJN15,GanL17}. 

Let $\Rr\subset\C^n$ be a subspace of dimension $0 < n_0 \ll n$ and let $\res{0} \in \C^{n_0\times n}$
be a matrix such that the columns of $\resh{0}$ span the subspace $\Rr$. Assuming that $A_{00} = \res{0} A \resh{0}$ is nonsingular,
we define the {\em two-level Schwarz preconditioner} as
\begin{equation}
  \label{eq:additive_two_level_additive}
  M^{-1}_{\text{\tiny additive}} = \resh{0} A_{00}^{-1} \res{0} + M_{\text{\tiny ASM}}^{-1}.
\end{equation}
Such preconditioners have been used to solve a large class of systems arising from
a range of engineering applications (see, for example,
\cite{AldGJT19,AldJS21,HeiHK20,KonC17,MarCJNT20,SmiBG96,VanSG09} and references therein).

We denote by $D_i \in \mathbb{R}^{n_i\times n_i}$, $i \in\llbracket 1,N\rrbracket$, 
any non-negative diagonal matrices such that
\begin{equation*} 
  \sum_{i=1}^N \rest{i} D_{i} \res{i} = I_n.
\end{equation*}
We refer to $\left(D_i\right)_{1 \le i \le N}$ as an \emph{algebraic partition of unity}.

\paragraph{Variants of one- and two-level preconditioners}
The so-far presented Schwarz preconditioners are the additive one-level~\cref{eq:additive_one_level} and the additive two-level based on additive coarse space correction~\cref{eq:additive_two_level_additive}.
It was noticed in~\cite{CaiS99} that scaling the one-level Schwarz preconditioner by using the partition of unity yields faster convergence.
The resulting one-level preconditioner is referred to as restricted additive Schwarz and is defined as:
\begin{equation}
 \label{eq:restricted_one_level}
 M^{-1}_{\text{\tiny RAS}} = \sum_{i=1}^N \rest{i} D_{i} A_{ii}^{-1} \res{i}.
\end{equation}
Furthermore, there is a number of ways of how to combine the coarse space with a one-level preconditioner such as the additive, deflated, and balanced combinations, see for example~\cite{TanNVE09}.
Given a one-level preconditioner $M_{\star}^{-1}$, where the subscript $\star$ stands for either ASM or RAS, the two-level preconditioner with additive coarse space correction is defined as
\begin{equation*}
  M^{-1}_{\star,\text{\tiny additive}} = \resh{0} A_{00}^{-1} \res{0} + M_{\star}^{-1}.
\end{equation*}
The two-level preconditioner based on a deflated coarse space correction is
\begin{equation*}
 M^{-1}_{\star, \text{\tiny deflated}} = \resh{0} A_{00}^{-1} \res{0} + M_{\star}^{-1} \left(I-A\resh{0} A_{00}^{-1} \res{0}\right).
\end{equation*}
Due to its simple form, the additive two-level Schwarz based on the additive coarse space correction is the easiest to analyze.
However, we observe that the deflated variant combined with the restricted additive Schwarz preconditioner has better performance in practice.
The theory and presentation in this work employs the additive two-level Schwarz preconditioner using an additive coarse space correction, however, all numerical experiments involving the proposed preconditioner employ the 
restricted additive two-level Schwarz with deflated coarse space correction so that the two-level preconditioner used in \cref{sec:numerical_experiments} reads as
\begin{equation}
 \label{eq:restricted_two_level_deflated}
 M^{-1}_{\text{\tiny deflated}} = \resh{0} A_{00}^{-1} \res{0} + M_{\text{\tiny RAS}}^{-1} \left(I-A\resh{0} A_{00}^{-1} \res{0}\right).
\end{equation}
Note that the aforementioned variants are agnostic to the choice of the partitioning and the coarse space. That is,
once the restriction operators to the subdomains and the coarse space are set, all these variants are available.
\paragraph{Local HPSD splitting matrices of sparse HPD matrix}
A local HPSD matrix associated with subdomain $i$ is any HPSD matrix of the form
\[
 P_i \widetilde{A}_i P_i^\top = 
 \begin{pmatrix}
  A_{Ii} & A_{I\Gamma, i} &\\
  A_{\Gamma I,i} & \widetilde{A}_{\Gamma,i} & \\
  &&
  \end{pmatrix},
\]
where $P_i = I_n(\part{Ii},\part{\Gamma i},\part{\text{c} i},:)$ is a permutation matrix, $A_{Ii} = A(\part{Ii},\part{Ii})$, $A_{I\Gamma, i}=A_{\Gamma I,i}^\top = A(\part{Ii},\part{\Gamma i})$, and $\widetilde{A}_{\Gamma,i}$ is any HPSD
matrix such that the following inequality holds
\[
 0 \le u^\top \widetilde{A}_i u \le u^\top A u, \quad u \in \C^n.
\]

First presented and analyzed in~\cite{AldG19}, local HPSD splitting matrices provide a framework to construct robust two-level Schwarz preconditioners for sparse HPD matrices.  Recently, this has led to the introduction of robust multilevel Schwarz preconditioners for finite element SPD matrices~\cite{AldGJT19}, sparse normal equations matrices~\cite{AldJS21}, and sparse general SPD matrices~\cite{AldJ21}.

\section{Two-level Schwarz preconditioner for sparse matrices}
\label{sec:twolevel}
We present in this section the construction of a two-level preconditioner for sparse matrices.
First, we introduce a new local splitting matrix associated with subdomain $i$
that uses local values of $A$ to construct a preconditioner which is cheap to setup.
In the special chase where $A$ is HPD diagonally dominant, we prove that these local matrices are local HPSD splitting matrices of $A$.
We demonstrate that these matrices, when used to construct a GenEO-like coarse space,
produce a two-level Schwarz preconditioner that outperforms existing two-level preconditioners in many applications, particularly in the difficult case
where $A$ is not positive definite.

\subsection{Local block splitting matrices of $A$ using lumping in the overlap}
\label{sec:splitting}

\begin{definition}{Local block splitting matrix.}
  \label{def:splitting}
 Given the overlapping partitioning of $A$ presented in~\cref{sec:DD}, we have for each $i\in\llbracket1,N\rrbracket$
 \[
 P_i A P_i^\top = 
 \begin{pmatrix}
  A_{Ii} & A_{I\Gamma i} &\\
     A_{\Gamma I i} & A_{\Gamma i} & A_{\Gamma \text{c} i}\\
     &A_{c\Gamma i} & A_{\text{c}i}
 \end{pmatrix}
\]
    Let $s_i$ be the vector whose $j$th component is the sum of the absolute values of the $j$th row of $A_{\Gamma \text{c} i}$, and let $S_i = \diag(s_i)$.
Define $\widetilde{A}_{\Gamma i} = A_{\Gamma i} - S_i$.
 The {\it local block splitting matrix} of $A$ associated with subdomain $i$ is defined to be
 $\widetilde{A}_i = \rest{i} \widetilde{A}_{ii} \res{i}$, where
\[
 \widetilde{A}_{ii} = 
 \begin{pmatrix}
  A_{Ii} & A_{I\Gamma i}\\
  A_{\Gamma I i} & \widetilde{A}_{\Gamma i}
 \end{pmatrix}.
\]
\end{definition}

Note that we only require the sum of the absolute values of each row in the local matrix $A_{\Gamma \text{c} i}$ to construct the local block splitting matrix of $A$ associated with subdomain $i$. 
Then, each of these values is subtracted from the corresponding diagonal entry of the local matrix $A_{\Gamma i}$.
We can therefore construct $\widetilde{A}_{ii}$ cheaply and concurrently for each subdomain.

The following lemma shows that if $A$ is HPD diagonally dominant, the local splitting
matrices defined in~\cref{def:splitting} are local HPSD splitting matrices.

\begin{lemma}
 \label{lemma:hpsd_splitting}
Let $A$ be HPD diagonally dominant.
 The local block splitting matrix $\widetilde{A}_{i}$ defined in~\cref{def:splitting} is local HPSD splitting
 matrix of $A$ with respect to subdomain $i$.
\end{lemma}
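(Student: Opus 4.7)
The plan is to verify the two defining properties of a local HPSD splitting matrix: first, that $\widetilde{A}_i$ itself is HPSD, and second, that $A - \widetilde{A}_i$ is HPSD (which is equivalent to the pointwise inequality $0 \le u^\hop \widetilde{A}_i u \le u^\hop A u$). Since $\widetilde{A}_i = \rest{i} \widetilde{A}_{ii} \res{i}$ is a zero-padded embedding of $\widetilde{A}_{ii}$, the first property reduces to showing $\widetilde{A}_{ii}$ is HPSD. Moreover, in permuted coordinates,
\[
P_i\bigl(A - \widetilde{A}_i\bigr) P_i^\top =
\begin{pmatrix}
 0 & 0 & 0\\
 0 & S_i & A_{\Gamma \text{c} i}\\
 0 & A_{c\Gamma i} & A_{\text{c}i}
\end{pmatrix},
\]
so the second property reduces to showing that the bottom-right $2\times 2$ block, call it $B$, is HPSD. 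The overall strategy is therefore to exhibit both $\widetilde{A}_{ii}$ and $B$ as Hermitian matrices that are diagonally dominant with non-negative diagonal entries; the classical Gershgorin-type argument then yields HPSD for both.

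For $\widetilde{A}_{ii}$, Hermitian-ness is immediate because $A$ is Hermitian and $S_i$ is a real diagonal matrix. For diagonal dominance, I would split into two cases. For a row index $j\in\part{Ii}$, the crucial observation is that, by the definition of $\part{\Gamma i}$ as the distance-one boundary of $\part{Ii}$ in $\mathcal{G}(A)$, we have $A_{jk}=0$ for every $k\in\part{\text{c}i}$. Hence the row sum of off-diagonal absolute values in $\widetilde{A}_{ii}$ coincides with that in $A$, and diagonal dominance of $A$ transfers directly. For a row $j\in\part{\Gamma i}$, the diagonal entry is $A_{jj}-(S_i)_{jj}=A_{jj}-\sum_{k\in\part{\text{c}i}}|A_{jk}|$, and this is non-negative and dominates $\sum_{k\in\part{Ii}}|A_{jk}|+\sum_{k\in\part{\Gamma i},\,k\ne j}|A_{jk}|$ precisely because of diagonal dominance of $A$ applied to that row.

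For $B$, Hermitian-ness is again inherited from $A$ together with $S_i$ being real diagonal. For the rows of $B$ indexed by $\part{\Gamma i}$, the diagonal entry is $(S_i)_{jj}=\sum_{k\in\part{\text{c}i}}|A_{jk}|$ and the off-diagonal row is exactly the $j$-th row of $A_{\Gamma \text{c} i}$, so diagonal dominance holds with equality by construction and the diagonal entry is manifestly non-negative. For the rows indexed by $\part{\text{c}i}$, the diagonal entry is $A_{jj}>0$, and since the missing columns (those in $\part{Ii}$) only remove non-negative contributions, diagonal dominance of $A$ again implies diagonal dominance of the corresponding row of $B$.

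The main obstacle, and the point that must be checked carefully, is the row-sum bookkeeping for indices in $\part{\Gamma i}$: this is where the lumping exactly redistributes the "missing" coupling to $\part{\text{c}i}$ into the diagonal of $\widetilde{A}_{\Gamma i}$ and into the diagonal of $B$, and the two diagonal-dominance inequalities one obtains are complementary pieces of the single diagonal-dominance inequality for row $j$ of $A$. Once the case analysis is laid out cleanly, both HPSD conclusions follow from the standard fact that a Hermitian diagonally dominant matrix with non-negative diagonal is HPSD, completing the verification that $\widetilde{A}_i$ is a local HPSD splitting matrix of $A$ with respect to subdomain $i$.
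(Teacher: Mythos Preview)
Your proposal is correct and follows essentially the same route as the paper: show that both $\widetilde{A}_{i}$ and $A-\widetilde{A}_{i}$ are Hermitian, diagonally dominant with non-negative diagonal, hence HPSD. You supply the row-by-row bookkeeping that the paper leaves implicit, but the underlying argument is identical.
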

\begin{proof}
 First, note that the $j$th diagonal element of $\widetilde{A}_{i}$ is
 \[
  \widetilde{A}_{i}(j,j) = \begin{cases}
    A(j,j)         & \text{ if } j \in \part{Ii},\\
    A(j,j)-s_i(j)  & \text{ if } j \in \part{\Gamma i},\\
      0              & \text{ if } j \in \part{\text{c} i},
   \end{cases}
 \]
    where $s_i$ is the vector whose $j$th component is the sum of the absolute values of the $j$th row of $A_{\Gamma \text{c} i}$.
 Therefore, by construction, $\widetilde{A}_{i}$ is Hermitian diagonally dominant, hence HPSD.
 Furthermore, $A-\widetilde{A}_{i}$ is Hermitian and diagonally dominant, hence HPSD.
 By the local structure of $\widetilde{A}_{i}$, we conclude it is HPSD splitting of $A$ with
 respect to subdomain $i$.
\end{proof}

\subsection{Coarse space}
\label{sec:coarse_space}
In this section we present a coarse space for the two-level Schwarz preconditioner.
For each $i\in\llbracket1,N\rrbracket$, given the local nonsingular matrix $A_{ii}$, the local splitting matrix $\widetilde{A}_{ii} = \res{i}\widetilde{A}_{i}\rest{i}$, and the partition of
unity matrix $D_i$, let $L_i=ker\left(D_iA_{ii}D_i\right)$ and $K_i = ker\left(\widetilde{A}_{ii}\right)$.
Now, define the following generalized eigenvalue problem:
\begin{align}
 \label{eq:gevp}
 \begin{split}
   &\text{find } (\lambda, u) \in \C\times \C^{n_i} \text{ such that}\\
   &\Pi_i D_i A_{ii} D_i \Pi_i u = \lambda \widetilde{A}_{ii} u,
 \end{split}
\end{align}
where $\Pi_i$ is the projection on $range\left(\widetilde{A}_{ii}\right)$.

Given a number $\tau > 0$, the coarse space we propose is defined to be the space generated by the columns of the matrix
\[
 \resh{i} = \begin{bmatrix} \rest{1}D_1Z_1&\cdots&\rest{N}D_NZ_N\end{bmatrix},
\]
where $Z_i$ is the matrix whose columns form a basis of the subspace
\begin{equation}
    \label{eq:tau}
 (L_i\cap K_i)^{\perp_{K_i}} \oplus span\left\{u \ | \ \Pi_i D_i A_{ii} D_i \Pi_i u = \lambda \widetilde{A}_{ii} u, |\lambda| > \frac{1}{\tau}\right\},
\end{equation}
where $(L_i\cap K_i)^{\perp_{K_i}}$ is the complementary subspace of $(L_i\cap K_i)$ inside $K_i$.

Note that in the case where $A$ is sparse HPD diagonally dominant, $\widetilde{A}_i$ is a local HPSD splitting matrix of $A$, and
the definition of the coarse space matches the one defined in~\cite{AldG19}.
Therefore, the two-level Schwarz preconditioner using the coarse space defined guarantees an upper bound on the condition number
of the preconditioned matrix
\[
 \kappa_2(M_{\text{\tiny ASM,additive}}^{-1} A) \le (k_c + 1) \left(2 + (2k_c +1)\frac{k_m}{\tau}\right),
\]
where $k_c$ is the number of colors required to color the graph of $A$ such that each two neighboring subdomains have different colors
and $k_m$ is the maximum number of overlapping subdomains sharing a row of $A$.
Therefore, when $A$ is sparse HPD diagonally dominant, the upper bound on $\kappa_2(M_{\text{\tiny ASM,additive}}^{-1} A)$ is independent of $N$ and can be controlled by
using the value $\tau$.

\section{Numerical experiments}
\label{sec:numerical_experiments}
In this section, we validate the effectiveness of the two-level method when
compared to other preconditioners.  \Cref{tab:suitesparse_comparison} presents
a comparison between four preconditioners: $\schwarz{\text{\tiny deflated}}$~\cref{eq:restricted_two_level_deflated},
 BoomerAMG~\cite{FalY02}, GAMG~\cite{AdaBKP04}, and
AGMG~\cite{Not10}.  The results are for the right-preconditioned GMRES~\cite{SaaS86} with a restart
parameter of 30 and a relative tolerance set to $10^{-8}$.  We highlight the
fact that our proposed preconditioner can handle unstructured systems, not
necessarily stemming from standard PDE discretization schemes, by displaying
some nonzero patterns in~\cref{fig:pattern}. For preconditioners used within
PETSc~\cite{PETSc} (all except AGMG), the systems are solved using 256 MPI processes. After
loading them from disk, their symmetric part $A^T + A$ is first renumbered by
ParMETIS~\cite{KarK98}. The resulting permutation is then applied to $A$ and
the corresponding linear systems are solved using a random right-hand side.
The initial guess is always zero.
The code that implements these steps is given in~\cref{sec:code}. For our DD
method, we leverage the PCHPDDM framework~\cite{JolRZ21} which is used to assemble
spectral coarse spaces using~\cref{eq:gevp}. The new option
\verb!-pc_hpddm_block_splitting!, introduced in PETSc 3.17, is used to
compute the local splitting matrices of $A$ from~\cref{sec:splitting}.
At most 60 eigenpairs are computed on each subdomain  and 
the threshold parameter $\tau$ from~\cref{eq:tau} is set to~0.3. 
These parameters were found to provide good numerical performance after a very
quick trial-and-error approach on a single problem.  We did not want to adjust
them for each problem individually, but it will be shown next that
they are fine overall without additional tuning.

Furthermore, a single subdomain is mapped to each process, i.e., $N = 256$
in~\cref{eq:restricted_one_level}.  Eventually, exact subdomain and second-level
operator LU factorizations are computed.

\pgfplotstableread{table.dat}\loadedtable
\begin{table}
  \caption{Preconditioner comparison.
    Iteration counts are reported. $M_{\text{\tiny deflated}}^{-1}$ is the restricted two-level
  overlapping Schwarz preconditioner as in~\cref{eq:restricted_two_level_deflated}.
   No value denotes iteration count exceeds \pgfmathprintnumber{100}.
   \dag~denotes either a failure in constructing the preconditioner or a breakdown in GMRES. 
   \ddag~denotes the problem is complex valued and the preconditioner is unavailable. Matrix identifiers that are emphasized correspond to symmetric matrices, otherwise matrices are non-self-adjoint.
   }
  \label{tab:suitesparse_comparison}
\pgfplotstablesort[sort key=m]{\sorted}{\loadedtable}
    \hspace*{-0.16\textwidth}
\begin{adjustbox}{width=1.15\textwidth}
\pgfplotstabletypeset[every head row/.style={before row=\hline,after row=\hline},
                      every last row/.style={after row=\hline},
                      every even row/.style={before row={\rowcolor[gray]{0.9}}},
                      columns/AGMG/.style={string type,string replace={-1}{},string replace={-2}{{\dag}}},
                      columns/HYPRE/.style={column name={BoomerAMG},string type,string replace={-3}{\ddag},string replace={-1}{},string replace={-2}{{\dag}}},
                      columns/GAMG/.style={string type,string replace={-3}{},string replace={-1}{},string replace={-2}{{\dag}}},
                      columns={identifier,m,nnz,AGMG,HYPRE,GAMG,HPDDM,nc},sort,sort key=m,
                      columns/m/.style={int detect,column name=$n$,dec sep align},
                      columns/nc/.style={int detect,column name=$n_0$,dec sep align},
                      columns/nnz/.style={int detect,column name=nnz($A$),dec sep align},
                      columns/HPDDM/.style={column name=$M_{\text{\tiny deflated}}^{-1}$},
                      every row 0 column 6/.style={postproc cell content/.style={@cell content=\textbf{##1}}},
                      every row 1 column 6/.style={postproc cell content/.style={@cell content=\textbf{##1}}},
                      every row 2 column 6/.style={postproc cell content/.style={@cell content=\textbf{##1}}},
                      every row 3 column 6/.style={postproc cell content/.style={@cell content=\textbf{##1}}},
                      every row 4 column 4/.style={postproc cell content/.style={@cell content=\textbf{##1}}},
                      every row 5 column 6/.style={postproc cell content/.style={@cell content=\textbf{##1}}},
                      every row 6 column 6/.style={postproc cell content/.style={@cell content=\textbf{##1}}},
                      every row 7 column 6/.style={postproc cell content/.style={@cell content=\textbf{##1}}},
                      every row 8 column 6/.style={postproc cell content/.style={@cell content=\textbf{##1}}},
                      every row 9 column 6/.style={postproc cell content/.style={@cell content=\textbf{##1}}},
                      every row 10 column 6/.style={postproc cell content/.style={@cell content=\textbf{##1}}},
                      every row 11 column 6/.style={postproc cell content/.style={@cell content=\textbf{##1}}},
                      every row 12 column 6/.style={postproc cell content/.style={@cell content=\textbf{##1}}},
                      every row 13 column 4/.style={postproc cell content/.style={@cell content=\textbf{##1}}},
                      every row 14 column 6/.style={postproc cell content/.style={@cell content=\textbf{##1}}},
                      every row 15 column 6/.style={postproc cell content/.style={@cell content=\textbf{##1}}},
                      every row 16 column 6/.style={postproc cell content/.style={@cell content=\textbf{##1}}},
                      every row 17 column 4/.style={postproc cell content/.style={@cell content=\textbf{##1}}},
                      every row 18 column 4/.style={postproc cell content/.style={@cell content=\textbf{##1}}},
                      columns/identifier/.style={string type},display columns/0/.style={column name=Identifier, column type={l|},
postproc cell content/.append code={%
\pgfplotstablegetelem{\pgfplotstablerow}{sym}\of{\sorted}
\ifnum\pgfplotsretval=1
\pgfkeyssetvalue{/pgfplots/table/@cell content}{\emph{##1}}%
\fi
}
                      }]\loadedtable
                      \end{adjustbox}
\end{table}
\begin{figure}
    \caption{Nonzero sparsity pattern of some of the test matrices from~\cref{tab:suitesparse_comparison}.\label{fig:pattern}}
    \begin{center}
    \subfloat[nxp1, $n = \pgfmathprintnumber{410004}$]{
\begin{tikzpicture}
    \begin{axis}[enlargelimits=false, axis on top, axis equal image, width=6.1cm,yticklabels={,,},xticklabels={,,}]
\addplot graphics [xmin=1,xmax=2,ymin=1,ymax=2] {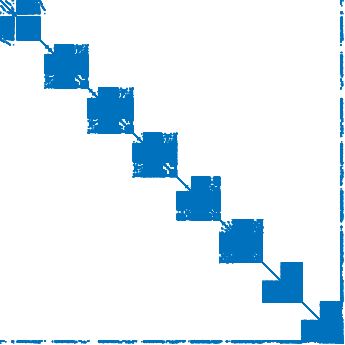};
\end{axis}
\end{tikzpicture}
    }
    \subfloat[CoupCons3D, $n = \pgfmathprintnumber{420000}$]{
\begin{tikzpicture}
    \begin{axis}[enlargelimits=false, axis on top, axis equal image, width=6.1cm,yticklabels={,,},xticklabels={,,}]
\addplot graphics [xmin=1,xmax=2,ymin=1,ymax=2] {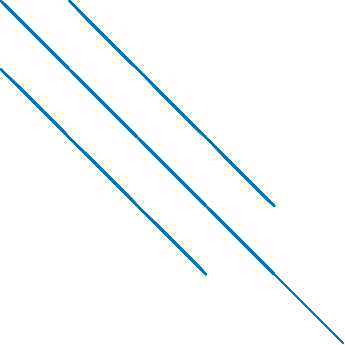};
\end{axis}
\end{tikzpicture}
    }
    \subfloat[memchip, $n = \pgfmathprintnumber{2700524}$]{
\begin{tikzpicture}
    \begin{axis}[enlargelimits=false, axis on top, axis equal image, width=6.1cm,yticklabels={,,},xticklabels={,,}]
\addplot graphics [xmin=1,xmax=2,ymin=1,ymax=2] {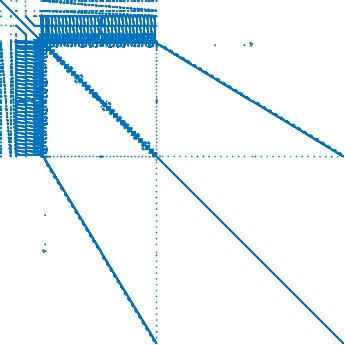};
\end{axis}
\end{tikzpicture}
    }
\end{center}
\end{figure}

A convection-diffusion problem will now be investigated. It reads:
\begin{align}
    \begin{split}
    \nabla \cdot (V u) - \nu \nabla\cdot(\kappa \nabla u) = 0 & \text{ in } \Omega \label{eq:conv-diff} \\
    u = 0 & \text{ in } \Gamma_0 \\ 
    u = 1 & \text{ in } \Gamma_1.
    \end{split}
\end{align}
The problem is SUPG-stabilized~\cite{BroH82} and discretized by FreeFEM~\cite{Hec12}. It is important to keep in mind that the proposed preconditioner is algebraic, thus there is no specific transfer of information from the discretization kernel to solver backend. The domain $\Omega$ is either the unit square or the unit cube meshed semi-structurally to account for boundary layers, see an example of such a mesh in~\cref{fig:mesh}. The value of $\nu$ is constant in $\Omega$. The value of $\kappa$ is given in~\cref{fig:kappa}. The value of the velocity field $V$ is either:
\begin{equation*}
    V(x,y) = \begin{pmatrix}x(1-x)(2y-1)\\-y(1-y)(2x-1)\end{pmatrix} \quad \text{ or } \quad V(x,y,z)=
        \begin{pmatrix}
2x(1-x)(2y-1)z\\
-y(1-y)(2x-1)\\
-z(1-z)(2x-1)(2y-1)
        \end{pmatrix} ,
\end{equation*}
in 2D and 3D, respectively. These are standard values taken from the literature~\cite{Not12}. The definition of $\Gamma_0$ and $\Gamma_1$ may be inferred by looking at the two- and three-dimensional solutions in~\cref{fig:2d-0,fig:2d-2,fig:2d-4} and~\cref{fig:3d-0,fig:3d-2,fig:3d-4}, respectively. The iteration counts reported in~\cref{tab:conv-diff} show that the proposed preconditioner handles this problem, even as $\nu$ tends to zero. In 2D, the operator, resp. grid, complexity is of at most 1.008, resp.\ 1.43. In 3D, these figures are 1.02 and 1.7, respectively.
\begin{table}
    \caption{Iteration counts of the proposed preconditioner for solving the two- and three-dimensional convection-diffusion problem from~\cref{eq:conv-diff} with order $k$ Lagrange finite element space. The number of subdomains is $N$ and the size of the discrete system is $n$. After each iteration count for each $\nu$, the size of second-level operator is typeset between parentheses.\label{tab:conv-diff}}
\begin{adjustbox}{width=\linewidth}
\begin{tabular}{c|c|c|c|cccccc}
  \hline
  \multirow{2}{*}{Dimension} &
  \multirow{2}{*}{$k$} & 
  \multirow{2}{*}{$N$} &
  \multirow{2}{*}{$n$} &
  \multicolumn{5}{c}{$\nu$}
      \\ 
    & & & & 1 & $10^{-1}$ & $10^{-2}$ & $10^{-3}$  & $10^{-4}$ \\  \hline
    2 & 1 & \pgfmathprintnumber{1024} & \pgfmathprintnumber[precision=1]{6255001} & 23 \tiny{(\pgfmathprintnumber{52875})} & 20 \tiny{(\pgfmathprintnumber{52872})} & 19 \tiny{(\pgfmathprintnumber{52759})} & 20 \tiny{(\pgfmathprintnumber{47497})} & 21 \tiny{(\pgfmathprintnumber{28235})} \\\hline
    3 & 2 & \pgfmathprintnumber{4096} & \pgfmathprintnumber[precision=1]{8120601} & 18 \tiny{(\pgfmathprintnumber[precision=1]{181796})} & 14 \tiny{(\pgfmathprintnumber[precision=1]{181342})} & 11 \tiny{(\pgfmathprintnumber[precision=1]{161413})} & 16 \tiny{(\pgfmathprintnumber{97657})} & 29 \tiny{(\pgfmathprintnumber{76853})} \\\hline
\end{tabular}
\end{adjustbox}
\end{table}
\begin{figure}
    \caption{(a) Mesh, (b) diffusivity coefficient, and solutions of some of the (c)--(e) two- and (f)--(h) three-dimensional test cases from~\cref{tab:conv-diff}.\label{fig:u}}
    \begin{center}
    \subfloat[Semi-structured mesh\label{fig:mesh}]{
        \includegraphics[width=0.25\textwidth]{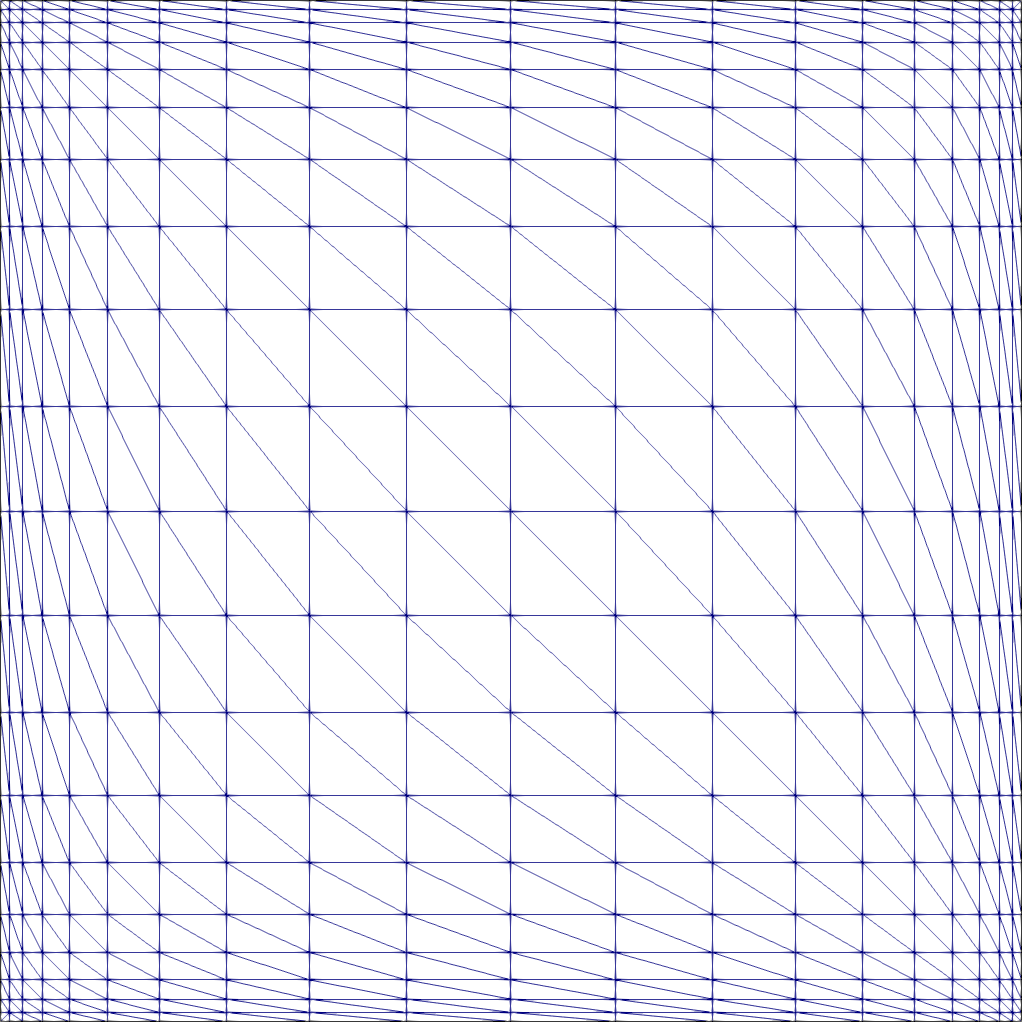}
    }\hspace*{3cm}
    \subfloat[Diffusivity coefficient $\kappa$\label{fig:kappa}]{
        \includegraphics[width=0.25\textwidth]{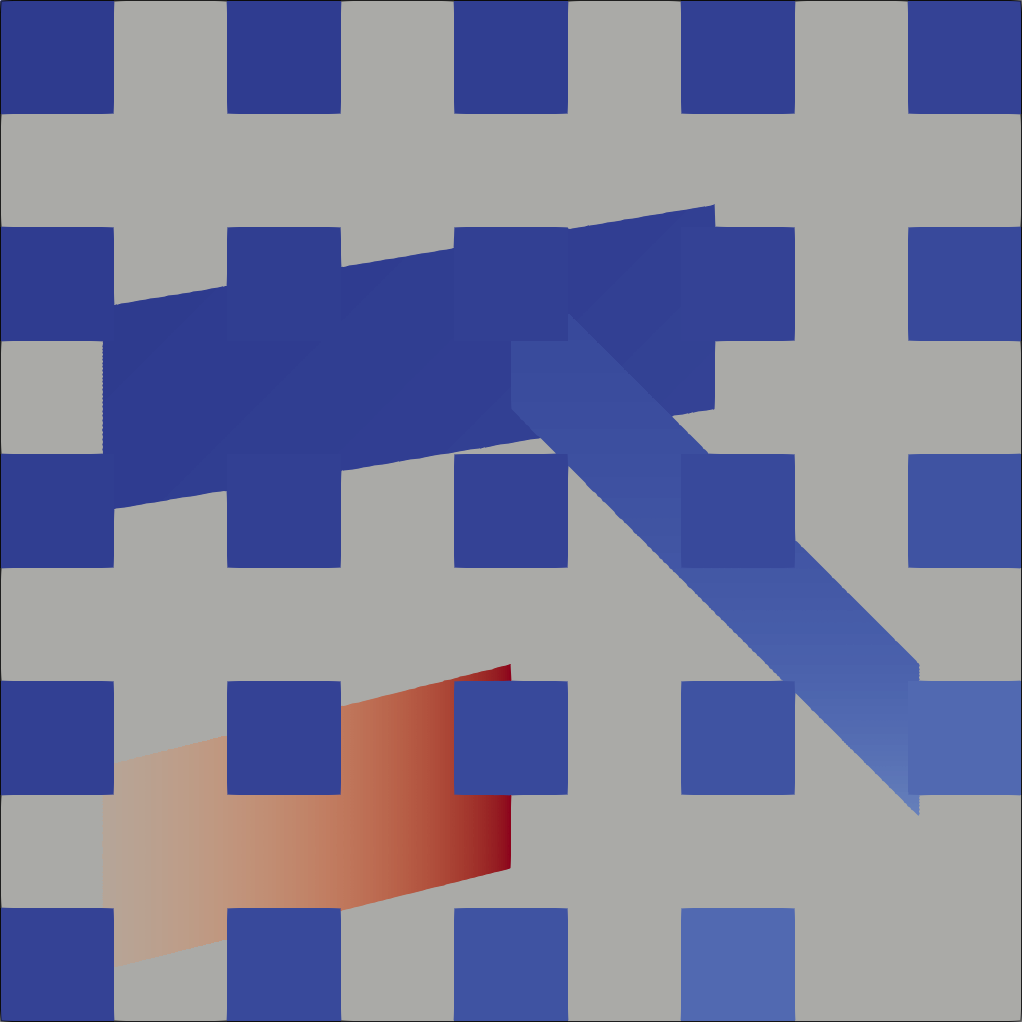}
        \raisebox{1cm}{\scalebox{0.7}{
            \begin{tikzpicture}[overlay]
	  \begin{axis}[height=4.0cm,
	  enlargelimits=false,
	  colorbar,
	  colormap name=paraview-heat,
	  point meta min=6e-2,
	  point meta max=2,
      hide axis,
      colorbar style={
          title={$\kappa$},
          scaled y ticks = false,
          extra y ticks={6e-2},
      }
	  ]
	  \end{axis}
	\end{tikzpicture}
}}
    } \\[1cm]
    \hfuzz=3.0pt
    \subfloat[$\nu=1$\label{fig:2d-0}]{
        \includegraphics[width=0.31\textwidth]{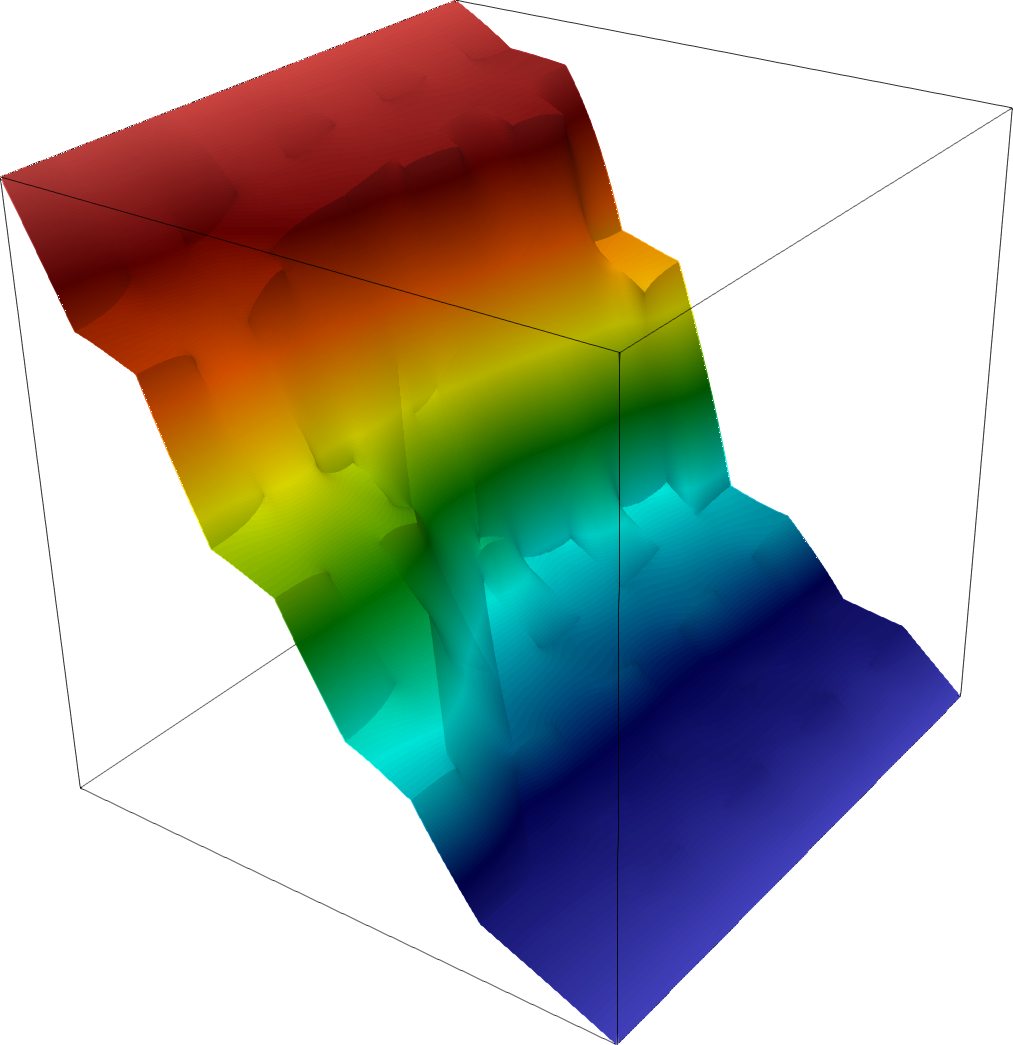}
    }
    \subfloat[$\nu=10^{-2}$\label{fig:2d-2}]{
        \scalebox{0.7}{
            \begin{tikzpicture}[overlay,yshift=7cm,xshift=2.0cm]
	  \begin{axis}[
	  colorbar,
	  colormap name=paraview,
	  point meta min=0,
	  point meta max=1,
      hide axis,height=4cm,
          colorbar horizontal,
      colorbar style={
          title={$u$},
      }
	  ]
	  \end{axis}
	\end{tikzpicture}
    }
        \includegraphics[width=0.31\textwidth]{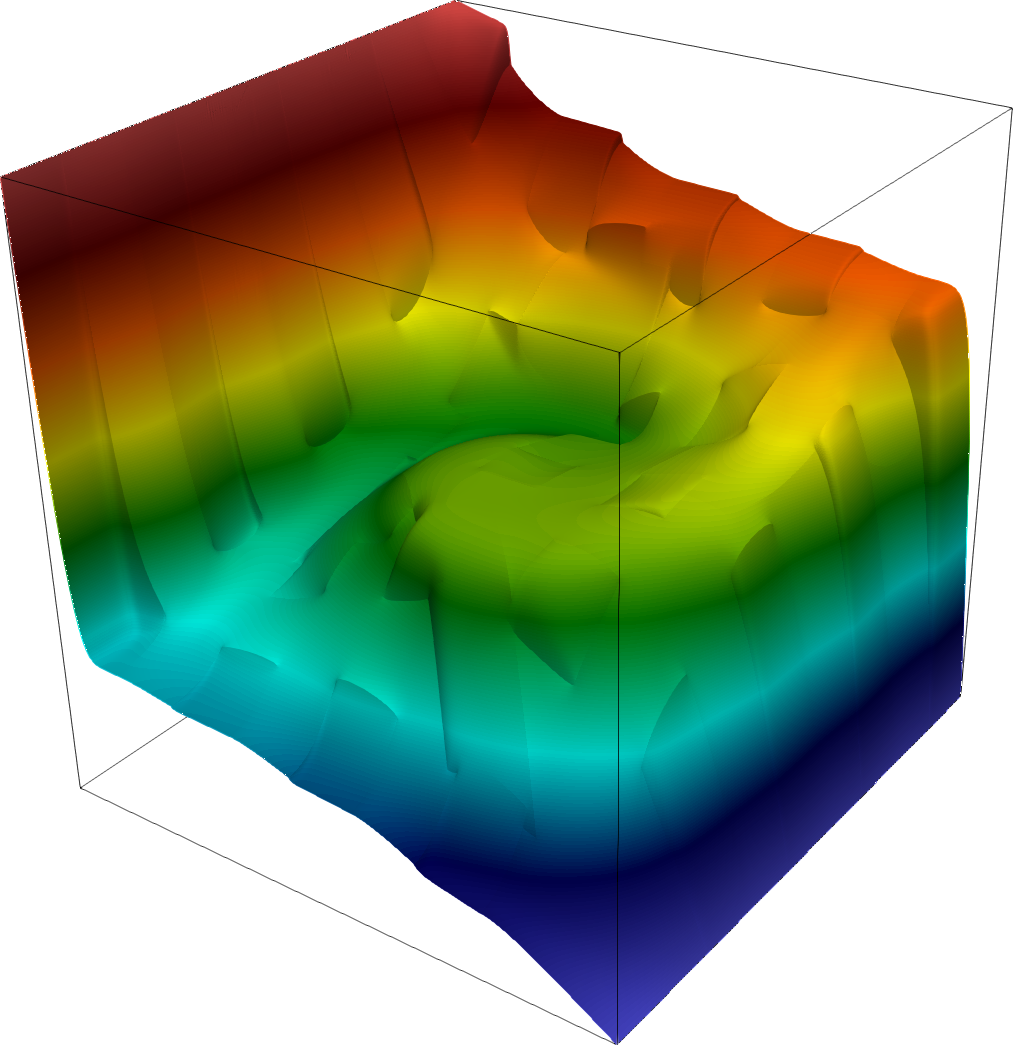}
    }
    \subfloat[$\nu=10^{-4}$\label{fig:2d-4}]{
        \includegraphics[width=0.31\textwidth]{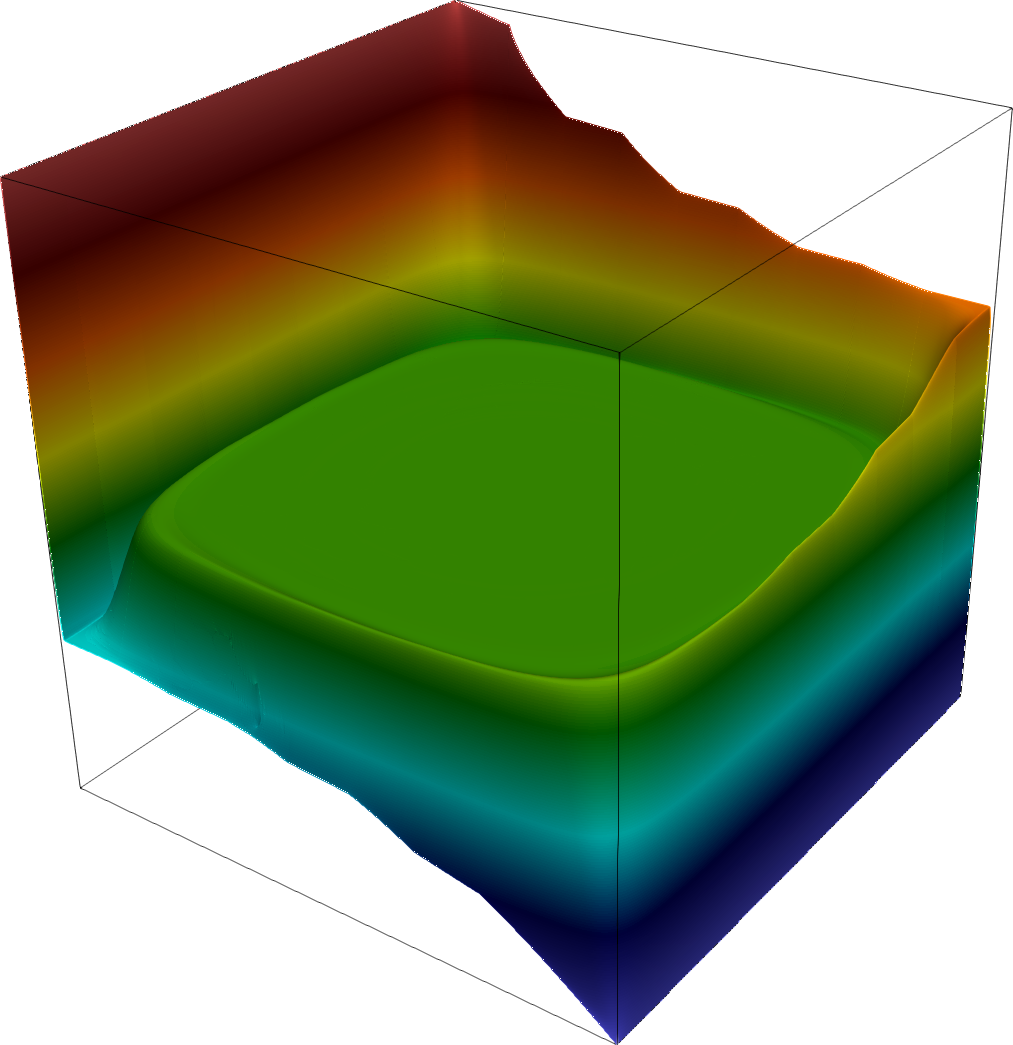}
    } \\
    \subfloat[$\nu=1$\label{fig:3d-0}]{
        \includegraphics[width=0.31\textwidth]{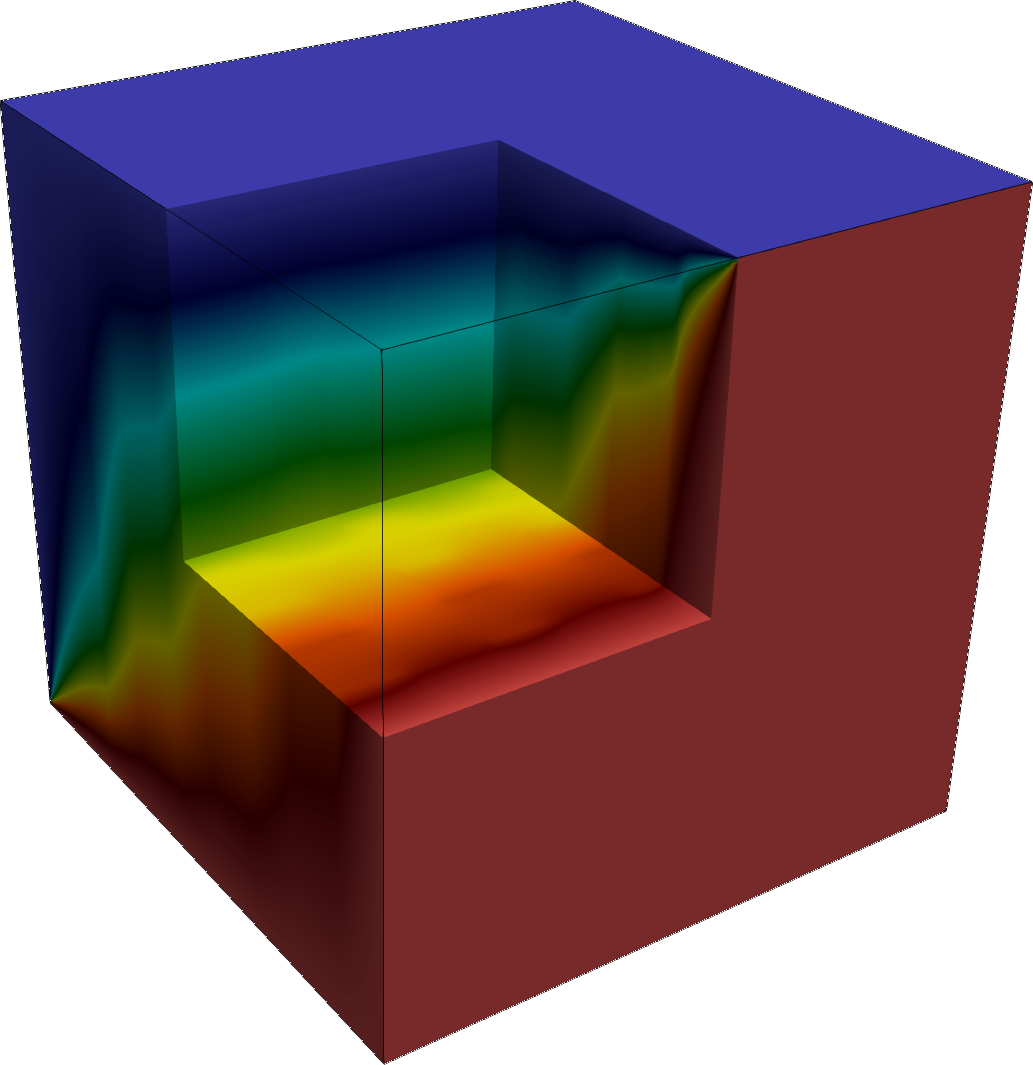}
    }
    \subfloat[$\nu=10^{-2}$\label{fig:3d-2}]{
        \includegraphics[width=0.31\textwidth]{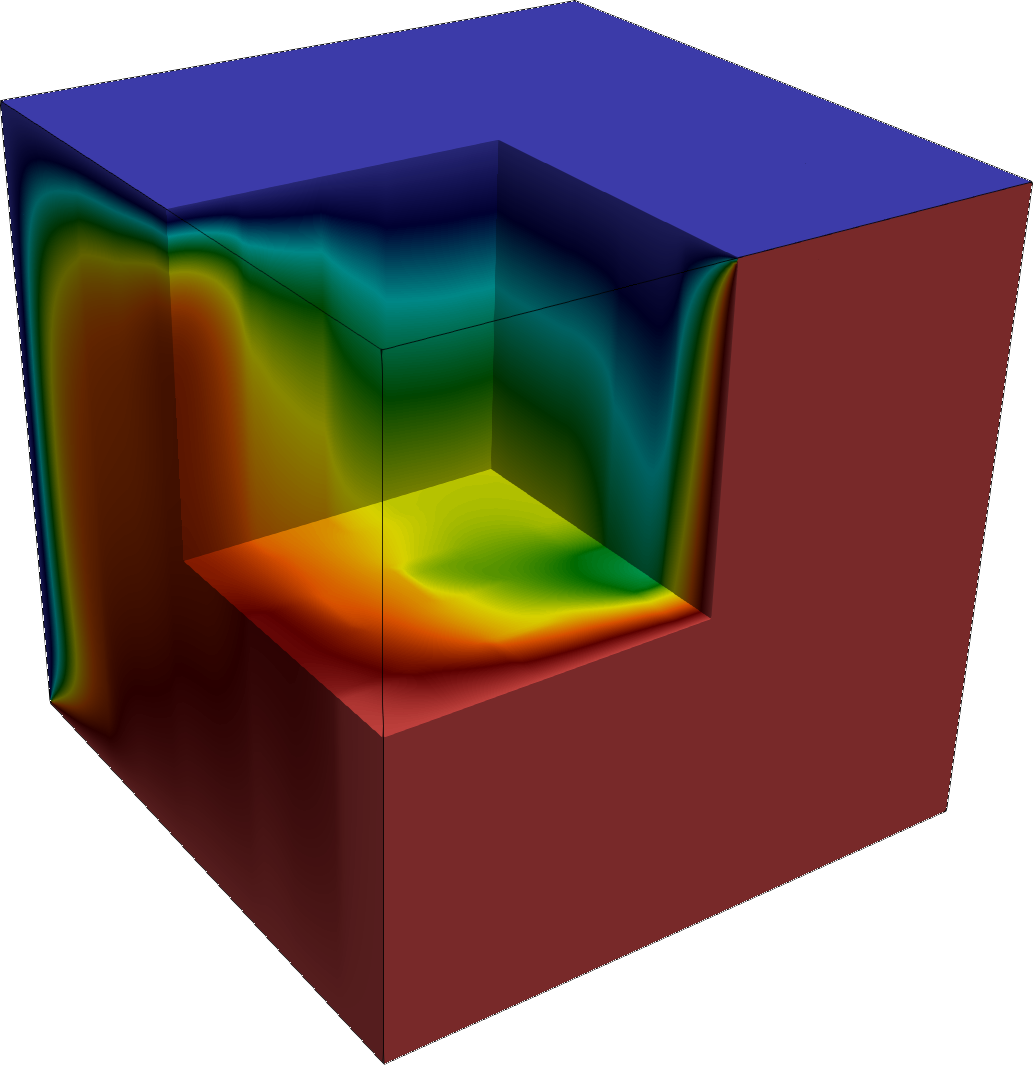}
    }
    \subfloat[$\nu=10^{-4}$\label{fig:3d-4}]{
        \includegraphics[width=0.31\textwidth]{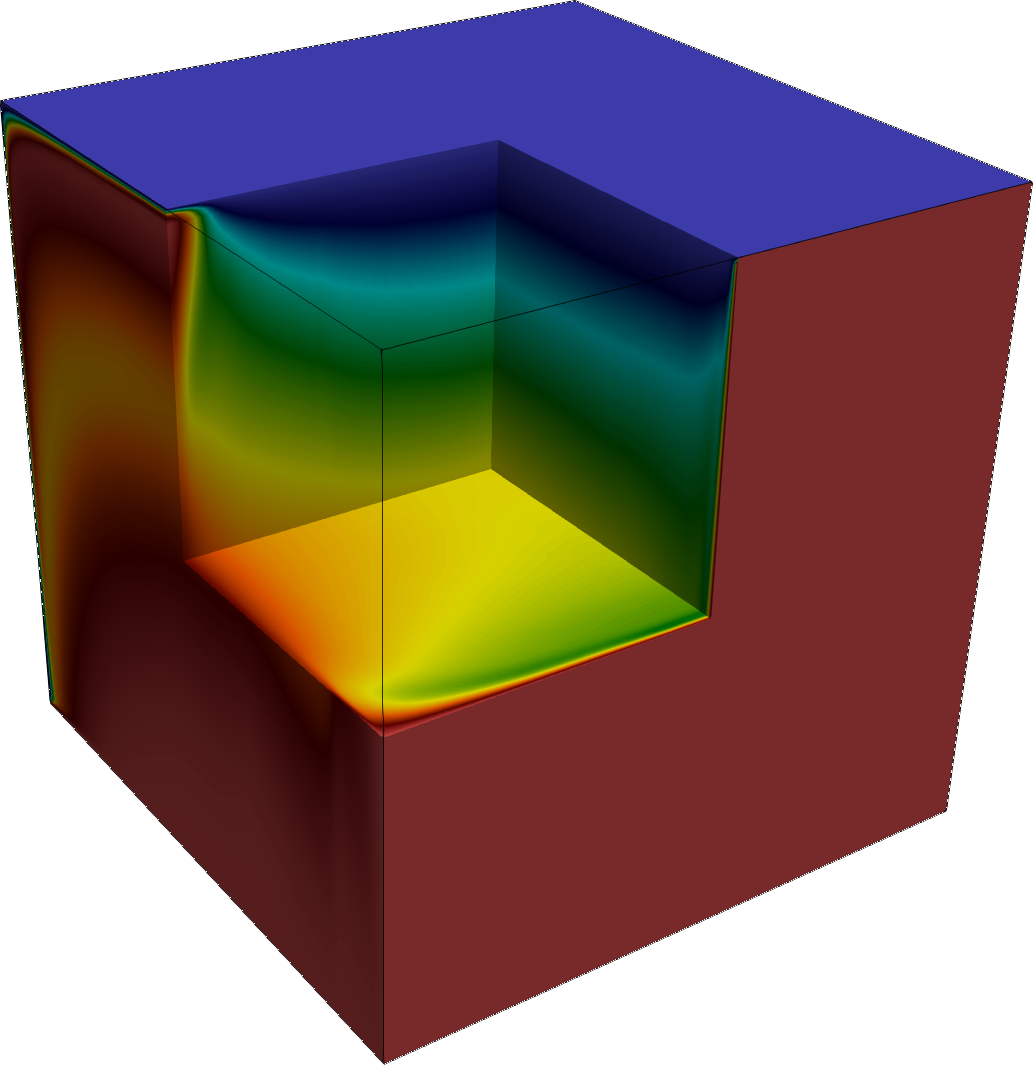}
    }
    \end{center}
    \hfuzz=0.0pt
\end{figure}
For comparison, GAMG and BoomerAMG iteration counts are also reported in \cref{tab:conv-diff-gamg} and \cref{tab:conv-diff-hypre}, respectively.
\begin{table}
    \centering
    \caption{Iteration counts of GAMG for solving the two- and three-dimensional convection-diffusion problem from~\cref{eq:conv-diff}. \dag~denotes either a failure to converge or a breakdown in GMRES. \label{tab:conv-diff-gamg}}
\begin{tabular}{c|c|cccccc}
  \hline
  \multirow{2}{*}{Dimension} &
  \multirow{2}{*}{$n$} &
  \multicolumn{5}{c}{$\nu$}
      \\ 
    & & 1 & $10^{-1}$ & $10^{-2}$ & $10^{-3}$  & $10^{-4}$ \\  \hline
    2 & \pgfmathprintnumber[precision=1]{6255001} & 42 & 48 & 88 & \dag & \dag \\\hline
    3 & \pgfmathprintnumber[precision=1]{8120601} & 40 & 38 & 65 & \dag & \dag \\\hline
\end{tabular}
\end{table}
\begin{table}
    \centering
    \caption{Iteration counts of BoomerAMG for solving the two- and three-dimensional convection-diffusion problem from~\cref{eq:conv-diff}. \dag~denotes either a failure to converge or a breakdown in GMRES. \label{tab:conv-diff-hypre}}
\begin{tabular}{c|c|cccccc}
  \hline
  \multirow{2}{*}{Dimension} &
  \multirow{2}{*}{$n$} &
  \multicolumn{5}{c}{$\nu$}
      \\ 
    & & 1 & $10^{-1}$ & $10^{-2}$ & $10^{-3}$  & $10^{-4}$ \\  \hline
    2 & \pgfmathprintnumber[precision=1]{6255001} & 50 & 49 & 19 & 7 & \dag \\\hline
    3 & \pgfmathprintnumber[precision=1]{8120601} & 12 & 9 & 7 & \dag & \dag \\\hline
\end{tabular}
\end{table}
\section{Conclusion}
\label{sec:conclusion}
We presented in this work a fully-algebraic two-level Schwarz preconditioner for large-scale sparse matrices.
The proposed preconditioner combines a classic one-level Schwarz preconditioner with a spectral coarse space.
The latter is constructed efficiently by solving concurrently in each subdomain a local generalized eigenvalue problem whose
pencil matrices are obtained algebraically and cheaply from the local coefficient matrix.
Convergence results were obtained for diagonally dominant HPD matrices.
The proposed preconditioner was compared to state-of-the-art multigrid preconditioners on a set of challenging matrices arising from 
a wide range of applications including a convection-dominant convection-diffusion equation.
The numerical results demonstrated the effectiveness and robustness of the proposed preconditioner especially for
highly non-symmetric matrices.

\appendix
\section*{Acknowledgments}
This work was granted access to the
GENCI-sponsored HPC resources of TGCC@CEA under allocation A0110607519. 

\newpage

\section{Code reproducibility}
\label{sec:code}
\lstset{emph={%
    CHKERRQ,ierr,%
    },emphstyle={\color{gray!90}},%
}%
\lstinputlisting[style=CStyle]{code/m.c}

\bibliographystyle{siamplain}
\bibliography{main}
\end{document}